\newtheorem{theorem}{Theorem}[section]
\newtheorem{remark}{Remark}
\newtheorem{example}{Example}
\newtheorem{proposition}[theorem]{Proposition}
\newtheorem{lem}[theorem]{Lemma}
\newtheorem{definition}[theorem]{Definition}
\renewcommand{\epsilon}{\varepsilon}
\begin{document}

\begin{center}
{\Large \bf A remark on the characterization of triangulated graphs}\\
\vskip0.5cm
{\bf R. GARGOURI$^a$\qquad H. NAJAR$^b$}\\
\footnotesize{$^a$ Taibah University, College of Sciences, Department of Mathematics\\
Madinah, Saudi Arabia\\
$^b$ D\'epartement de Math\'ematiques Facult\'e des Sciences de Monastir. Laboratoire de recherche: Alg\'ebre G\'eom\'etrie et  Th\'eorie Spectrale : LR11ES53\\
(Tunisia). Email:hatem.najar@ipeim.rnu.tn }\\
\end{center}
\date{}
\begin{abstract}
In this study we consider the problem of triangulated graphs. Precisely we give a necessary and sufficient condition for a graph to be triangulated. This give an alternative characterization of triangulated graphs.
Our method is based on  the so called perfectly nested sequences.
\end{abstract}
\baselineskip=20pt \setcounter{section}{0}
\renewcommand{\theequation}{\arabic{section}.\arabic{equation}}
\noindent \textbf{AMS Classification:} 05C85 (68R10, 05C30, 68W05, 05C17, 05C76) \newline
\textbf{Keywords:}  Triangulated Graphs, Perfect Set, Clique.
\newpage
\section{Introduction}
It is well known that graph theory provides simple, but powerful tools for
constructing models and solving numerous types of interdisciplinary problems and possess a wide range of applications \cite{Bla}. Indeed graphs and graph theory can be used is several areas as software designs, computer networks, social networks, communications networks, information networks,  transportation networks, biological networks, managerial
problems and others.\newline
In 1736, the problem of the Königsberg bridges was considered as the first problem that laid the foundation of graph theory. Since the start of interest in this well known problem several questions and problems have arisen.\newline\vskip0.05cm
Regarding the  topic of this note, the triangulated graphs,  they form an important class among graphs. Since the end of the last sentry a lot of work have been done in the theory of triangulated graphs (which we will define properly below). In some references triangulated graphs are variously called as rigid circuit graphs \cite{Di}, chordal graphs \cite{Ber1} or monotone transitive graphs, like in \cite{Ros1}.\newline \vskip0.01cm
Triangulated  graphs can be characterized in a number of
different ways. See \cite{Bun, Di, Ful, Gav,Hab} and \cite{Ros1}.
We recall that  a vertex $v$ of a graph G is said to be simplicial if $v$ together
with all its adjacent vertices induce a clique in $G$. An ordering
$v_1, v_2,\cdots  v_n $ of all the vertices of $G$ forms a perfect elimination ordering of $G$ if each $v_i, 1 \leq i \leq  n$, is simplicial in the subgraph induced
by $v_i, v_{i+1}, \cdots , v_n$. In \cite{Di}, we find a necessary condition for a graph $G$ to be triangulated which is the existence of simplicial vertex. In \cite{Ful}, Fulkerson and Gross, state that a graph $G$ is triangulated if, and only if, it has a perfect elimination ordering. Precisely, Fulkerson and Gross showed that the class of triangulated
graphs is exactly the class of graphs having perfect elimination orderings.
Thus when the input graph $G$ is not triangulated, no perfect elimination of it
exists.  Rose et al in \cite{Ros} treat the question of triangulated graphs and also give several characterizations of minimal
triangulations.  In \cite{Iba} the author give a new representation of a triangulated graph called the clique-separator graph, whose nodes are the maximal cliques and minimal vertex separators of the graph.\newline At the end of this section we mention that triangulated graphs have applications in sevral areas such as computer
vision \cite{Chu}, the solution of
sparse symmetric systems of linear equations \cite{Ros2}, data-base management
systems \cite{Tar} and  knowledge based systems \cite{Eng}. At the end of the paper we collect two main consequences of triangulated graphs. Another consquence for triangulated graphs is the problem of finding a maximum clique. Indeed,  in a triangulated graph we get the answer in polynomial-time, while the same problem for general graphs is NP-complete. More generally, a triangulated  graph can have only linearly many maximal cliques, while non-chordal graphs may have exponentially many \cite{Kat}.
\subsection{Basic concept of graph theory}
In this section we  will enumerate and explain the basic
definitions and the necessary terminology to make use
of graph theory. There is a great variety in how different authors presented the basic definitions of the graph theory. Indeed there are many roughly equivalent definitions of a graph.\newline  Most commonly, a graph ${\displaystyle G}$ is defined as an ordered pair ${\displaystyle (V,E)}$ , where ${\displaystyle V=\{v_{1},\cdots ,v_{n},\cdots \}}$ is called the graph's vertex-set or some times the node set and  ${\displaystyle E=\{e_{1},\ldots ,e_{m},\cdots \}\subset \{\{x,y\}|x,y\in V\}} $ is called the edges set. \newline Given a graph ${\displaystyle G} ,$ we often denote the vertex—set by ${\displaystyle V(G)} $ and the edge—set by ${\displaystyle E(G)}$. To visualize a graph as described above, we draw  dots corresponding to vertices $ {\displaystyle v_{1},\ldots ,v_{n},\cdots }$. Then, for all $ {\displaystyle i,j\in \{1,\cdots ,n,\cdots \}}$ one imagine  a line between the dots corresponding to vertices ${\displaystyle v_{i},v_{j}} $ if and only if there exists an edge ${\displaystyle \{v_{i},v_{j}\}\in E}$. Note that the placement of the dots is generally unimportant; many different pictures can represent the same graph as it is given in the example below:

\begin{example}
\end{example}
\begin{center}
  \includegraphics[scale=1]{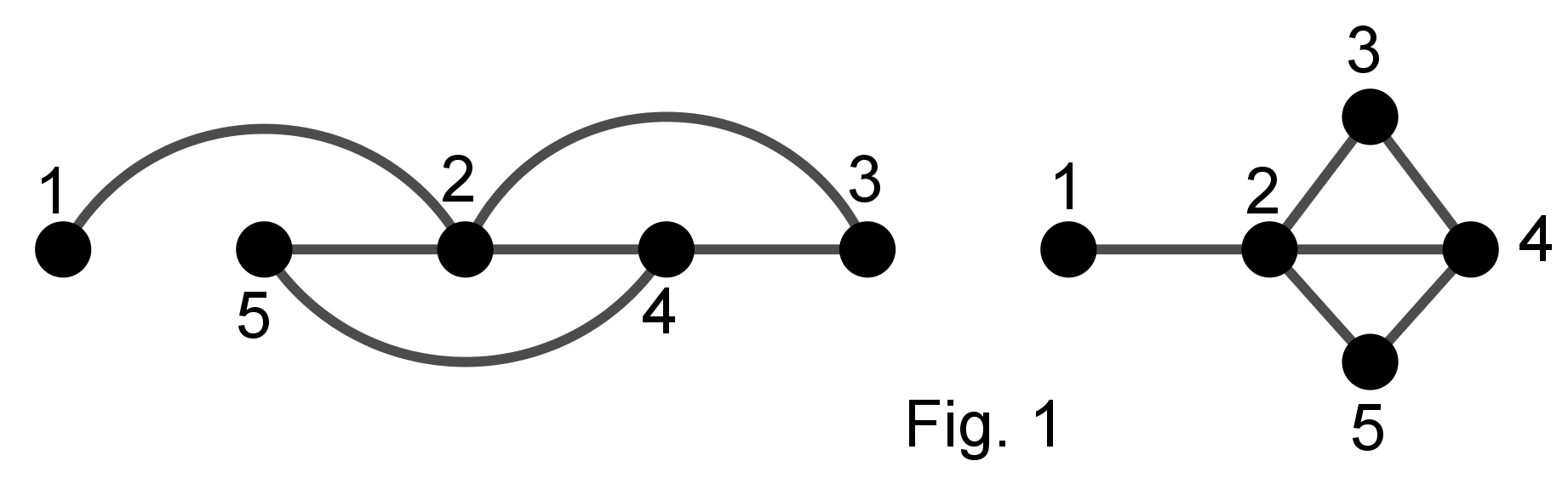}
\end{center}
A subgraph is a concept akin to the subset. A subgraph has a subset of the vertex set $A\subseteq V,$ a subset $E(A)=\{ \{x,y\}\in E: x,y\in A\}$ of the edge set $ E,$ and each edge's endpoints in the larger graph has the same edges in the subgraph. We denote it by $G(A)=(A,E(A))$. \newline
Two vertices are said to be adjacent if there is an edge joining them. Given $x\in V,$ the set of all adjacent vertices in $G$ is denoted by $Adj(x),$
\begin{equation}
Adj(x)=\{y\in V, \{x,y\}\in E\}.
\end{equation} The word incident has two meanings:
On the one hand, an edge $e$ is said to be incident to a vertex $v$ if $v$ is an endpoint of $e$.
On the other hand, two edges are also incident to each other if both are incident to the same vertex. A set $C$ of vertices is a clique if every pair of vertices in $C$ are adjacent. A clique of a graph $G$ is a complete subgraph of $G$.\\
A path is a sequence of edges ${\displaystyle <e_{1},...,e_{N}>} $  (also denoted $(v_1,\cdots ,v_n)$ such that $e_i$ is adjacent to $e_{i+1}$ for all $i$ from $1$ to $N-1, e_i$ relates $v_i$ to $v_{i+1}$.  Two vertices are said to be connected if there is a path connecting them. A cycle is a path such that the last edge of the path is adjacent to the first and visit each vertices once (in some references they call this elementary cycle).\newline
In a simple graph each edge connects two different vertices and no
two edges connect the same pair of vertices.  Multi-graphs may have multiple edges connecting the same two vertices. An edge that connects a vertex to itself is called a loop. Two graphs $ G$ and $G'$ are said to be isomorphic if there is a one-to-one function from (or, if you prefer, one-to-one correspondence between) the vertex set of $ G$ to the vertex set of $G'$ such that two vertices in $G$ are adjacent if and only if their images in $G'$ are adjacent. Technically, the multiplicity of the edges must also be preserved, but our definition suffices for simple graphs, which are graphs without multiple edges or loops.
\subsubsection{Definitions}
\begin{definition}
A graph is called {\bf{triangulated}} if every cycle of length greater than three possesses a chord, i.e. an edge joining two non-consecutive vertices of the cycle.
\end{definition}
\begin{definition}
A vertex $x$ of a graph $G=(V,E)$ is called perfect in $G$ if $Adj(x)=\emptyset$ or $(\{x\}\cup Adj(x))$ is a clique. For $A\subseteq V,$ we denote by $P(A)$ the set of all perfect vertices of $A$ in $G(A)$.
\end{definition}
\begin{example}
\end{example}
\begin{center}
  \includegraphics[scale=0.9]{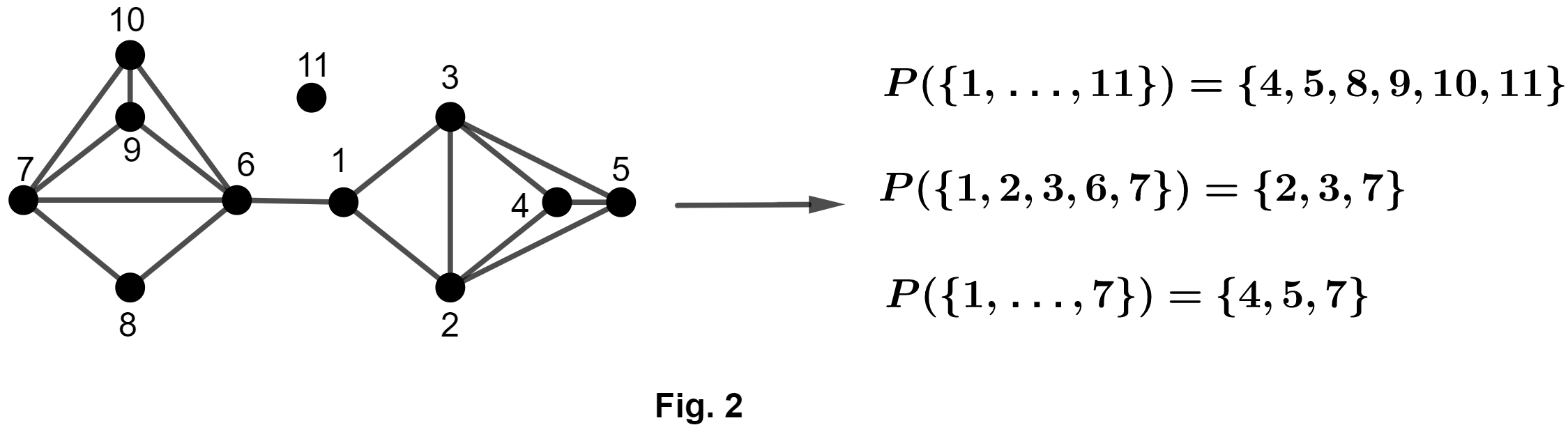}
\end{center}

\begin{definition} Let $G=(V,E),$be a graph. A sequence $(U_n)_{n\in \mathbb{N}}$ of subsets of $\mathcal{P}(V),$ is said to be {\bf{perfectly nested }} on on $G=(V,E),$ if it satisfies the following three conditions
\begin{enumerate}
\item $U_0=V$.
\item $\forall n\in \mathbb{N},$ $$U_{n+1}\subseteq U_n.$$
\item $\forall n\in \mathbb{N}, P(U_n)\neq \emptyset$, and
$$U_n\setminus U_{n+1}\subseteq P(U_n).$$
\end{enumerate}
We say that the sequence is {\bf{stationary  perfectly nested}}, if furthermore the three last conditions, there exists $n_0\geq 0$ such that we have $P(U_n)=U_n,$ for any $n\geq n_0$.
\end{definition}
\section{The results}
\begin{proposition}\label{prop1}
Let $G=(V,E)$ be a graph and $A\subset P(X).$ Then the following items are equivalents:
\begin{enumerate}
\item $G=(V,E)$ is triangulated;
\item $G=(V\backslash A)$ is triangulated.
\end{enumerate}
\end{proposition}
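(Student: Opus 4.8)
The plan is to prove the two implications separately, treating $(1)\Rightarrow(2)$ as the routine direction and isolating the genuine content in $(2)\Rightarrow(1)$. Throughout I read the hypothesis as $A\subseteq P(V)$, so that every vertex of $A$ is perfect in $G$, and I interpret $G=(V\setminus A)$ as the induced subgraph $G(V\setminus A)$.

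For $(1)\Rightarrow(2)$ I would use the elementary fact that a cycle of an induced subgraph is already a cycle of the ambient graph, and that a chord of such a cycle in $G$ necessarily joins two of its vertices, all of which survive the deletion of $A$. Concretely, I would take an arbitrary cycle $C$ of length at least four in $G(V\setminus A)$, regard it as a cycle in $G$, invoke triangulatedness of $G$ to obtain a chord $\{v_i,v_j\}$ between two non-consecutive vertices of $C$, and observe that $v_i,v_j\in V\setminus A$ forces $\{v_i,v_j\}\in E(V\setminus A)$. Hence $C$ has a chord in the subgraph, and since $C$ was arbitrary, $G(V\setminus A)$ is triangulated. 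This half in fact uses nothing about $A$ beyond $A\subseteq V$.

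For $(2)\Rightarrow(1)$ the key observation I would isolate first is purely local: if $a$ is a perfect vertex lying on a cycle $C$ of length at least four, then the two neighbours of $a$ along $C$, say $u$ and $w$, both belong to $Adj(a)$; since $\{a\}\cup Adj(a)$ is a clique, $u$ and $w$ are adjacent, and because $C$ has length at least four they are non-consecutive on $C$, so $\{u,w\}$ is a chord. With this in hand I would take an arbitrary cycle $C$ of length at least four in $G$ and split into two cases. If $C$ meets $A$, I pick any $a\in A\cap C$; since $a$ is perfect in $G$, the local observation produces a chord. If $C$ avoids $A$, then $C$ lies entirely in $V\setminus A$, hence is a cycle of the triangulated graph $G(V\setminus A)$ and has a chord there, which is also a chord in $G$. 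In either case $C$ has a chord, so $G$ is triangulated.

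The step I expect to require the most care is the local observation in $(2)\Rightarrow(1)$, specifically verifying that the two cycle-neighbours of a perfect vertex are genuinely non-consecutive on the cycle. This is exactly where the hypothesis \emph{length greater than three} enters: for a triangle the two neighbours of a vertex are themselves consecutive, so the edge between them is not a chord, whereas for length at least four they sit at cycle-distance two and the joining edge is a legitimate chord. I would spell out this cycle-indexing detail explicitly to close any off-by-one gap.
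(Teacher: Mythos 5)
Your proposal is correct and follows essentially the same route as the paper: the same case split on whether the cycle meets $A$, with the chord in the first case coming from the perfectness of a vertex of $A\cap C$ and in the second case inherited from the induced subgraph $G(V\setminus A)$. The only difference is one of care, not substance: you spell out the non-consecutivity of the two cycle-neighbours of the perfect vertex (where the paper just asserts that $Adj(v_{i_0})\cap C$ being a clique yields a chord), which tightens a step the paper leaves implicit.
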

\begin{proof}
It is clear that $1)$ implies $2)$.\newline
For the other sense, let us consider $A\neq \emptyset$. Let $C=(v_1,\cdots ,v_n)$ be a cycle in $\displaystyle G=(V,E)$.
There are two possibilities
\begin{enumerate}[(a)]
\item If $C\subseteq V\setminus A,$ then $C$ has a chord.
\item If $C \cap A\neq \emptyset,$ there exists $i_0\in\{1,\cdots, n\}$ such that $v_{i_0}\in A$. As $v_{i_0}$ is a perfect vertex, then $\displaystyle Adj(v_{i_0})\cap C$ is a clique and so, $C$ has a chord. So $G=(V,E)$ is triangulated.
\end{enumerate}
\end{proof}

\begin{theorem}\label{tho1}
  Let $G=(V,E)$ be a graph. Suppose that there exists  a perfectly nested sequence on $G=(V,E)$. Then $\displaystyle G=(V,E)$ is a triangulated graph.
\end{theorem}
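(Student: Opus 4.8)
The plan is to verify the defining property of a triangulated graph directly: take an arbitrary cycle $C=(v_1,\dots,v_n)$ of length $n>3$ in $G$ and produce a chord. The nested sequence $(U_n)_{n\in\mathbb{N}}$ gives a natural way to ``time-stamp'' each vertex by the step at which it leaves the sequence, and the crucial mechanism---already visible in case (b) of the proof of Proposition \ref{prop1}---is that a perfect vertex forces its two neighbours along the cycle to be adjacent. So the whole proof should reduce to locating, for each long cycle, a suitable perfect vertex.

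Concretely, I would first locate the last stage at which the entire cycle is still present. Since $C\subseteq U_0=V$ and the sets $U_n$ are decreasing, set $N=\max\{n: C\subseteq U_n\}$ (the existence of this maximum is the delicate point, discussed below). Then $C\subseteq U_N$ but $C\not\subseteq U_{N+1}$, so there is a vertex $v_{i_0}\in C\cap(U_N\setminus U_{N+1})$. By condition 3 of the definition of a perfectly nested sequence, $v_{i_0}\in P(U_N)$; that is, $v_{i_0}$ is perfect in the induced subgraph $G(U_N)$.

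Next I would exploit this perfectness exactly as in Proposition \ref{prop1}. The two cycle-neighbours $v_{i_0-1}$ and $v_{i_0+1}$ of $v_{i_0}$ lie in $C\subseteq U_N$ and are adjacent to $v_{i_0}$, hence both belong to $\{v_{i_0}\}\cup Adj(v_{i_0})$, which is a clique in $G(U_N)$ because $v_{i_0}$ is perfect. Therefore $v_{i_0-1}$ and $v_{i_0+1}$ are adjacent. Since $n>3$, these two vertices are non-consecutive on $C$, so the edge $\{v_{i_0-1},v_{i_0+1}\}$ is a chord of $C$, which is what we want.

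The main obstacle is justifying that the maximum $N$ above really exists, i.e. that no chordless cycle of length $>3$ can survive indefinitely inside the stable set $W=\bigcap_{n}U_n$; this is the one place where the nesting must be used globally rather than one step at a time. I would handle the leftover case $C\subseteq W$ by arguing that on $W$ the peeling has stabilised, so that $G(W)$ has every vertex perfect and is therefore a disjoint union of cliques, in which every cycle of length $>3$ trivially has a chord. An alternative and cleaner packaging of the entire argument is inductive via Proposition \ref{prop1}: taking $A=U_n\setminus U_{n+1}\subseteq P(U_n)$ shows that $G(U_n)$ is triangulated if and only if $G(U_{n+1})$ is, and chaining these equivalences down to the union-of-cliques stage yields that $G=G(U_0)$ is triangulated.
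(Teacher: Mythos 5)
Your treatment of the main case is exactly the paper's proof, only more carefully written out: locate the last index $N$ with $C\subseteq U_N$, pick $v_{i_0}\in C\cap(U_N\setminus U_{N+1})\subseteq P(U_N)$, and use perfectness of $v_{i_0}$ in $G(U_N)$ to produce the chord $\{v_{i_0-1},v_{i_0+1}\}$ (the paper never even exhibits the chord explicitly). Moreover, you have put your finger on precisely the step that the paper passes over in silence: the paper simply asserts that some $n$ with $C\subseteq U_n$ and $C\nsubseteq U_{n+1}$ exists, i.e.\ it ignores the possibility that $C\subseteq W=\bigcap_n U_n$.

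However, your patch for that leftover case has a genuine gap, and in fact no patch can work under the definition as written. You claim that ``on $W$ the peeling has stabilised, so that $G(W)$ has every vertex perfect''; nothing in the definition of a perfectly nested sequence yields this. Condition 3 only demands $P(U_n)\neq\emptyset$ and $U_n\setminus U_{n+1}\subseteq P(U_n)$, and both are satisfied when $U_{n+1}=U_n$, so the constant sequence $U_n=V$ is perfectly nested as soon as $P(V)\neq\emptyset$. Concretely, let $G$ be a chordless $5$-cycle together with one isolated vertex $f$: then $P(V)=\{f\}\neq\emptyset$, the constant sequence $U_n=V$ is perfectly nested, yet $G$ is not triangulated. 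So the theorem as literally stated is false, your stabilisation claim fails in general, and your alternative packaging via Proposition \ref{prop1} breaks for the same reason (the chain of equivalences never reaches a graph known to be triangulated). Everything is repaired by adding the stationarity hypothesis from the paper's definition, $P(U_n)=U_n$ for all $n\geq n_0$, which is exactly what Theorem \ref{tho2} assumes: then any cycle surviving in $W$ lies in $U_{n_0}$, where every vertex is perfect, adjacency is transitive, $G(U_{n_0})$ is a disjoint union of cliques, and your argument closes. In that corrected setting your write-up is complete and sharper than the paper's; as a proof of Theorem \ref{tho1} exactly as stated, both your attempt and the paper's own proof founder at the same point, but yours at least makes the obstruction visible.
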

\begin{proof}
Let $C=(v_1,v_2,\cdots, v_n), n\geq 4,$ be  a cycle in the graph $G=(V,E)$. Let $(U_n)_n$ a perfectly nested sequences.  There exists $n\in \mathbb{N},$ such that
$$C\subseteq U_n,C\nsubseteq U_{n+1}.$$
So,
$$C \cap P(U_n)\neq \emptyset.$$
This ensures that there exists a perfect vertices $x\in C\cap G(U_n)$. So $C$ has a chord.
\end{proof}

\begin{remark} When $V$ is infinite let us notice that we can construct triangulated graphs which do not have a perfectly nested sequence. Indeed for $V=\mathbb{Z}$ and $E=\{\{n,n+1\}, n\in \mathbb{Z}\}, G=(V,E)$ is triangulated and $P(V)=\emptyset$.
\end{remark}

\begin{theorem}\label{tho2}
Let $G=(V,E)$ be a graph, with $V$ being a finite set. Then, $G=(V,E)$ is triangulated if and only if there exists a stationary perfectly nested sequence
on $G=(V,E)$.
\end{theorem}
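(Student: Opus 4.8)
The plan is to prove the two implications separately, noting that one of them comes almost for free from the results already established. For the \emph{if} direction, suppose a stationary perfectly nested sequence $(U_n)_n$ exists on $G$. By definition such a sequence also satisfies the three defining conditions of a perfectly nested sequence, so it is in particular a perfectly nested sequence on $G$. Theorem \ref{tho1} then applies verbatim and yields that $G$ is triangulated; the stationarity clause is not even needed here.

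For the \emph{only if} direction, assume $G$ is triangulated with $V$ finite, and build the sequence recursively. First I would set $U_0 = V$. Given $U_n$ with $G(U_n)$ triangulated and nonempty, I would invoke the classical theorem of Dirac \cite{Di}, that every finite triangulated graph admits a simplicial (that is, perfect) vertex, to conclude $P(U_n) \neq \emptyset$; this is exactly the first half of condition $3$ in the definition of a perfectly nested sequence. If $P(U_n) = U_n$ I would halt the construction and set $U_m = U_n$ for every $m \geq n$. Otherwise I would choose a perfect vertex $x_n \in P(U_n)$ and set $U_{n+1} = U_n \setminus \{x_n\}$, so that $U_{n+1} \subsetneq U_n$ and $U_n \setminus U_{n+1} = \{x_n\} \subseteq P(U_n)$, which gives condition $2$ and the second half of condition $3$. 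The point that keeps the recursion running is that $G(U_{n+1})$ is again triangulated: since $\{x_n\} \subseteq P(U_n)$, Proposition \ref{prop1} applied to $G(U_n)$ guarantees precisely this. One could equally peel off the whole set $P(U_n)$ at once, as Proposition \ref{prop1} is stated for arbitrary subsets of perfect vertices, but removing one vertex at a time keeps the bookkeeping transparent.

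Then I would verify termination and the stationarity clause. Because $V$ is finite and $|U_n|$ drops by one at each step at which we do not halt, the strictly decreasing chain $U_0 \supsetneq U_1 \supsetneq \cdots$ must reach, after at most $|V|$ steps, an index $n_0$ with $P(U_{n_0}) = U_{n_0}$; in the extreme case this happens when $|U_{n_0}| = 1$, where the unique vertex has empty adjacency and is therefore perfect. From $n_0$ on the sequence is constant, so $P(U_n) = U_n \neq \emptyset$ and $U_n \setminus U_{n+1} = \emptyset \subseteq P(U_n)$ for all $n \geq n_0$, verifying the remaining conditions together with the stationarity requirement. The resulting $(U_n)_n$ is thus a stationary perfectly nested sequence on $G$.

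I expect the only genuinely nontrivial input to be Dirac's existence result for a simplicial vertex; once it is granted, the argument is just a recursive peeling off of perfect vertices, which is exactly the perfectly nested reformulation of a perfect elimination ordering, with Proposition \ref{prop1} certifying that triangulation is preserved at each step. Everything else is finite bookkeeping driven by the hypothesis that $V$ is finite.
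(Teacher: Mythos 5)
Your proof is correct, and its skeleton is the same as the paper's: sufficiency is delegated verbatim to Theorem \ref{tho1}, and necessity is proved by iteratively peeling off perfect vertices, with the existence of a perfect (simplicial) vertex in a nonempty finite triangulated graph (your appeal to Dirac \cite{Di} plays the role of the paper's Lemma \ref{per}, quoted from \cite{Ros1}) and with Proposition \ref{prop1} certifying that each induced subgraph remains triangulated. Where you genuinely diverge is the stationarity argument. The paper removes a subset of $P(U_n)$ at each step and then obtains stationarity by invoking the connectivity Lemma \ref{con}, asserting that ``at the end it remains only one or two perfect points''; this is vague, and Lemma \ref{con} presupposes connectedness, which the theorem does not assume. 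You instead remove a single perfect vertex per step, halt as soon as $P(U_{n_0})=U_{n_0}$, and observe that halting is forced within $|V|$ steps because a one-vertex graph is all-perfect; moreover your halting rule keeps every $U_n$ nonempty, so the requirement $P(U_n)\neq\emptyset$ never fails. This eliminates Lemma \ref{con} from the proof entirely and also dodges a subtlety the paper handles only implicitly by taking $V_1\varsubsetneq P(V)$ a proper subset: peeling off all of $P(U_n)$ at once can empty the graph (e.g.\ when $G(U_n)$ is complete, since then $P(U_n)=U_n$). So the two routes share the same engine, but yours has a tighter, self-contained termination step, while the paper's allows removing several vertices at a time, shortening the sequence at the cost of an extra lemma and a looser endgame.
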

\begin{proof}
For the proof, we need the following two basic lemmas:

\begin{lem}\label{per} \cite{Ros1}
Let $G=(V,E)$ be a triangulated finite graph. Then $P(V)\neq \emptyset$.
\end{lem}
\begin{remark}
  The proof of the last lemma is given in \cite{Ros1} by using the notion of the minimal separators and elimination  process. From a different point of view we can see this by noting that if we suppose that $P(V)=\emptyset,$ starting with a non perfect point $x$ it has necessary two adjacent points $x_1,y_1$ which them selves are non adjacent to each others. As $V$ is a finite set, a typical end of the process should be in the form of Fig1. At this step, as no point can be adjacent to a single point being a non perfect point it is adjacent to more than two points. This leads forcibly the existence of a non-chordal cycle of length more than $3$. So $G=(V,E)$ is not triangulated.
\end{remark}
\begin{lem}\label{con}
Let $G=(V,E)$ be a connected graph. Let $A\subset P(V)$. Then,
$\displaystyle G=(V\setminus A),$ is also a connected graph.
\end{lem}
\begin{proof}
Let $A\subseteq P(V),$ and  $v_1,v_n\in V\setminus A$. As $G=(V,E)$ is a connected graph, there exists a path $P=(v_1,\cdots, v_n)$ in $\displaystyle G=(V,E),$ for any $v_{i_0}\in A\cap P,$ as $v_{i_0-1}$ and $v_{i_0+1}$ are in $Adj(v_{i_0}),$ we get that $v_{i_0-1}\in Adj(V_{i_0+1},$ by the definition of $v_{i_0}$ being a perfect point. So we get a path $P'=(v_1,\cdots,v_{i_0-1},v_{i_0+1},\cdots, v_n)$ in $\displaystyle G=(V\setminus A,E(A))$.
\end{proof}
Let us start by the proof of the necessary condition. By Lemma \ref{per} we know that when $G=(V,E)$ is triangulated, then $P(V)\neq \emptyset$; For $V_1\varsubsetneq P(V)$, we set
$$U_2=U_1\setminus V_1, {\rm{with}}\ U_1=V.$$
By Proposition \ref{prop1}, $G(U_2,E(U_2))$ is a triangulated graph, so $P(U_2)\neq \emptyset$. Let $V_2\subseteq P(U_2),$ we set
$$U_3=U_2\setminus V_2. $$
 In the same way we construct the perfectly nested sequence. As the set $V$ is finite by assumption we get the stationarity property using the result of Lemma \ref{con}, as at the end
it remains only one or two perfect points.\newline
For the sufficient condition it is given by Theorem \ref{tho1}
\end{proof}
Below we give an example where using our result we get the answer to the question of triangulated graph after only three steps.
\begin{example}\label{ex3}

\begin{center}
  \includegraphics[scale=1]{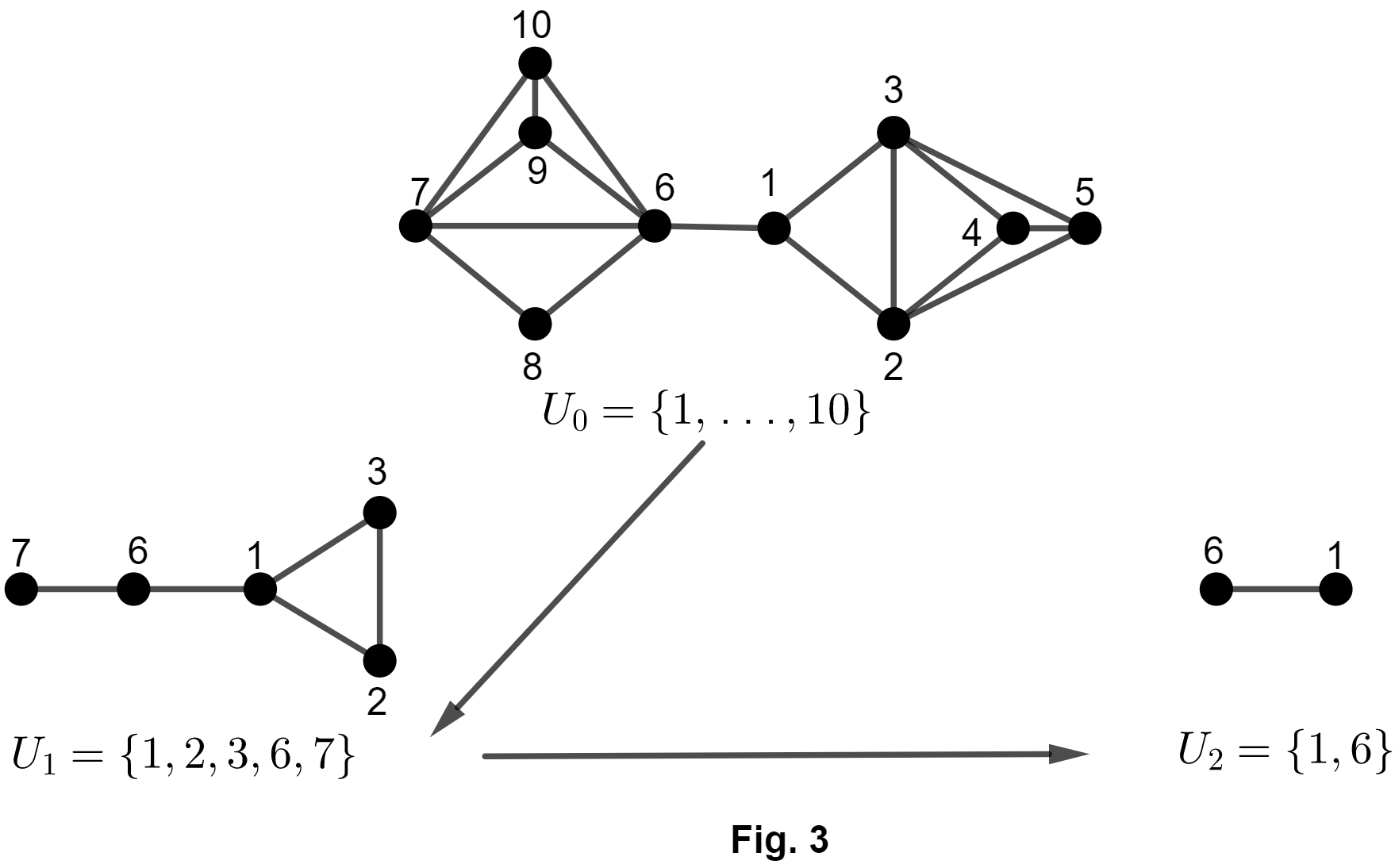}
\end{center}

\end{example}

Let us end this section by the following remark.
\begin{remark}
In the particular case, if we take a perfectly nested sequence in the Theorem \ref{tho2}  with the property that for any $1\leq i <n$ , $\displaystyle |U_i\setminus U_{i+1}|=1,|U_{n}|=1$ we get the characterization given in \cite{Ros1}, by taking
$$\alpha: \{1,\cdots ,n\}\mapsto V.$$
$$\alpha^{-1}(U_i\setminus U_{i+1})=\{i\},\alpha^{-1}(U_{n})=\{n\} .$$
\end{remark}
\section{Some consequences of triangulation}
For completeness, below we collect some possible implications of our main result. In addition to the consequence given in the introduction which concerns  the answer in polynomial-time to some problems for triangulated graphs, while the same problem for general graphs is NP-complete. Below we collect two more consequences of triangulated graphs.
\subsection{Directed Acyclic Graphs}
An orientation $D$ of a finite graph $G$ with $n$ vertices is obtained by
considering a fixed direction, either $x \to y$ or $y \to  x$, on every edge $\{xy\}$
of $G$.\\
We call an orientation $D$  acyclic if there does not exist any directed
cycle. A directed graph having no directed cycle is
known as a directed acyclic graph, we write DAG for short. DAGs provide
frequently used data structures in computer science for encoding dependencies.
The topological ordering is another way to describe a DAG.
A topological ordering of a directed graph $G=(V,E)$ is an ordering of
its vertices as $v_1, v_2, \cdots , v_n$
such that for every arc $v_i \to  v_j$ , we have
$i < j$.\newline
Let us consider an acyclic orientation $D$ of $G$. An edge of $D$, or
 is said to be dependent (in D) if its reversal creates
a directed cycle in the resulted orientation. Note that $v_i \to v_j$ is a
dependent arc if and only if there exists a directed walk of length
at least two from $v_i$ to $v_j$. We denote by $d(D)$, the number of dependent
arcs in $D$.
\begin{definition}
A graph $G$ is called fully orientable if it has an acyclic orientation
with exactly $d$ dependent arcs for every number $d$ between  $dmin(G)$ and $dmax(G)$, the minimum
and the maximum values of $d(D)$ over all acyclic orientations $D$ of
$G$.
\end{definition}
\begin{example}
An acyclic orientation with $6$ dependents arcs.
\end{example}

\begin{center}
  \includegraphics[scale=0.5]{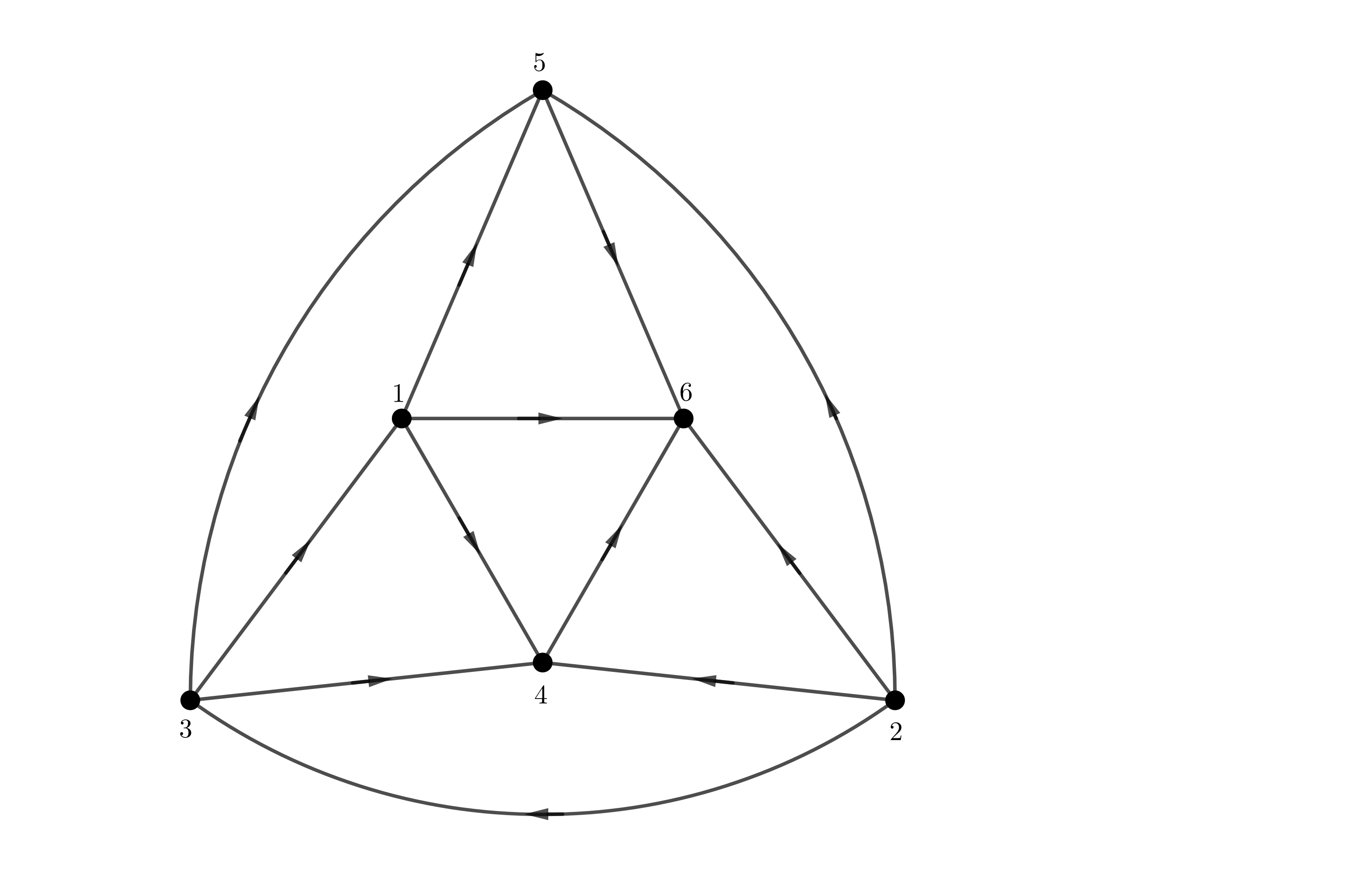}
\end{center}

In \cite{Lai}, the authors show that all chordal
graphs are fully orientable. Let us denote the complete $r$-partite graph each of whose partite
sets has $n$ vertices by $K_r(n)$. As it is also known \cite{Cha} that $K_r(n)$
is not fully
orientable when $r \geq  3$ and $n \geq 2$. One deduces that the acyclic orientation $K_3(2)$ given in the example is not a triangulated graph.
If we consider the graph of example \ref{ex3}, as it is a triangulated graph we deduce that it is a fully orientable graph.

\subsection{Chromatic Number}
 A graph coloring is an assignment of labels, called colors, to the vertices of a graph such that no two adjacent vertices share the same color.
\begin{definition}
\begin{enumerate}
\item A Clique number $\omega(G),$ is the maximum size of a clique in $G$.
\item A Chromatic number $\chi(G),$ is the minimum coloring number.
\end{enumerate}

\end{definition}
\begin{remark} For a general graph $G,$ we have
  \begin{equation}\label{R1} \chi(G)\geq \omega(G).
\end{equation}For triangulated graphs we have equality in equation $(\ref{R1})$.
\end{remark}

\end{document}